\newcommand{\RR}{\mathbb{R}}
\newcommand{\QQ}{\mathbb{Q}}
\newcommand{\PSD}{\mathcal{S}_+}
\newcommand{\rank}{\textup{rank}\,}
\newcommand{\rankpsd}{\textup{rank}_{\textup{psd}}\,}
\newcommand{\conv}{\textup{conv}}
\newcommand{\cone}{\textup{cone}}
\newcommand{\onevec}{\mathbbm{1}}
\newtheorem{theorem}{Theorem}[section]
\newtheorem{lemma}[theorem]{Lemma}
\newtheorem{proposition}[theorem]{Proposition}
\theoremstyle{definition}
\newtheorem{definition}[theorem]{Definition}
\newtheorem{example}[theorem]{Example}
\theoremstyle{remark}
\title{Worst-Case Results for Positive Semidefinite Rank}
\author{Jo{\~a}o Gouveia}
\address{CMUC, Department of Mathematics,
  University of Coimbra, 3001-454 Coimbra, Portugal}
\email{jgouveia@mat.uc.pt} 
\author{Richard Z. Robinson}
\address{Department of Mathematics, University of Washington, Box
  354350, Seattle, WA 98195, USA} \email{rzr@uw.edu}
\author{Rekha R. Thomas}
\address{Department of Mathematics, University of Washington, Box
  354350, Seattle, WA 98195, USA} \email{rrthomas@uw.edu}
\thanks{Gouveia was supported by by the Centre for Mathematics at the University of Coimbra and Fundac\~ao para a Ci\^encia e a
Tecnologia, through the European program COMPETE/FEDER, Robinson by 
the U.S. National Science Foundation Graduate Research Fellowship under Grant No. DGE-0718124, and Thomas by 
the U.S. National Science Foundation grant DMS-1115293.}  
\begin{document}

\begin{abstract}
We present various worst-case results on the positive semidefinite (psd) rank of a nonnegative matrix, primarily in the context of polytopes. We prove that the psd rank of a generic $n$-dimensional polytope with $v$ vertices is at least 
$(nv)^{\frac{1}{4}}$ improving on previous lower bounds. For polygons with $v$ vertices, we show that psd rank cannot exceed $4 \left \lceil v/6 \right \rceil$ which in turn shows that the psd rank of a $p \times q$ matrix of rank three is at most $4\left\lceil \min\{p,q\}/6 \right\rceil$. In general, a nonnegative matrix of rank ${k+1 \choose 2}$  has psd rank at least $k$ and we pose the problem of deciding whether the psd rank is exactly $k$.  
Using geometry and bounds on quantifier elimination, we show that this decision can be made in polynomial time when $k$ is fixed. 
\end{abstract}

\maketitle

\section{Introduction}
The {\em positive semidefinite (psd) rank} of a nonnegative matrix was introduced in \cite{FMPTW} and \cite{GPT2011} and is a special case of the {\em cone rank} of a nonnegative matrix from \cite{GPT2011}.  A familiar example of a cone rank is that of {\em nonnegative rank}; given a nonnegative matrix 
$M \in \RR^{p \times q}$, its nonnegative rank is the smallest positive integer $k$ such that there exists nonnegative vectors $a_1, \ldots, a_p, b_1,\ldots,b_q \in \RR^k_+$ such that $M_{ij} = a_i^Tb_j$ for each $i$ and $j$. Let $\mathcal{S}^k$ denote the vector space of all real symmetric $k \times k$ matrices and $\PSD^k$ denote the cone of all psd matrices in $\mathcal{S}^k$. Then the psd rank of $M$ is the smallest integer $k$ such that there exists matrices $A_1,\ldots,A_p, B_1, \ldots, B_q \in \PSD^k$ such that $M_{ij} = \langle A_i,B_j\rangle := \textup{Trace}(A_iB_j)$ for all $i$ and $j$. We denote it as $\rankpsd(M)$ and call the matrices $A_1,\ldots,B_q$ a \emph{psd factorization} of $M$.

Nonnegative rank has been studied for several years, has many applications \cite{CohenRothblum}, and is NP-hard to compute 
\cite{Vavasis}. There are several techniques for finding lower bounds to nonnegative rank, most of which are based on the zero/nonzero structure (support) of the matrix \cite{CohenRothblum,FioriniKaibelPashkovichTheis}, and some newer methods that do not rely on support \cite{BraunFioriniPokuttaSteurer,FawziParrilo}. For a nonnegative $p \times q$ matrix, the psd rank is at most the nonnegative rank which in turn is at most $\min\{p,q\}$.  However, psd rank is more complicated than nonnegative rank with almost no techniques available for finding bounds. In this paper we exhibit bounds for psd rank in several contexts, and establish new tools for studying this rank. 

The motivation for the definition of psd rank came from geometric problems concerning the representation of convex sets for linear optimization. For instance, given a polytope $P \subset \RR^n$, one can ask whether $P$ can be expressed as the linear image of an affine slice of some positive orthant $\RR^k_+$. If $k$ is small relative to the number of facets of $P$, then this implicit representation of $P$ can be far more efficient that the natural representation of $P$ by inequalities in $n$ variables. In the foundational paper \cite{Yannakakis}, Yannakakis proved that the smallest $k$ possible for a given $P$ is the nonnegative rank of a {\em slack matrix} of $P$ (also called 
the nonnegative rank of $P$). If $P$ is a full-dimensional polytope in $\RR^n$ with vertices $p_1, \ldots, p_v$ and $f$ facet-defining inequalities $d_j - c_j^Tx \geq 0$ where $c_j \in \RR^n, d_j \in \RR$, then the corresponding slack matrix of $P$ is the $v \times f$ nonnegative matrix whose $(i,j)$-entry is $d_j - c_j^Tp_i$, the {\em slack} of vertex $p_i$ in the facet inequality $d_j - c_j^Tx \geq 0$. Yannakakis' result was extended in \cite{FMPTW} and \cite{GPT2011} to show that the psd rank of a slack matrix of $P$ (often referred to as the psd rank of $P$) is exactly the smallest $k$ such that $P$ is the linear image of an affine slice of the psd cone $\PSD^k$.  Affine slices of the psd cone are called \emph{spectrahedra} and can be written as $\left\{ (x_1,\ldots,x_{d-1})\; | \; g(x_1,\ldots,x_{d-1})\succeq 0 \right\}$ where $g$ is the linear matrix pencil $x_1G_1+\ldots+x_{d-1}G_{d-1} + G_d$ defined by the matrices $G_1,\ldots,G_d \in \mathcal{S}^k$.  Again, if $k$ is small compared to $f$, one can very often optimize a linear function  efficiently over $P$ via {\em semidefinite programming}. This geometric connection has made psd rank an important invariant of a polytope and both upper and lower bounds on this rank shed information on the complexity of the polytope. One can further extend Yannakakis' theorem (and hence psd rank) to all convex sets \cite{GPT2011} but in this paper we only consider polytopes. 

Finding lower bounds on the psd rank of a nonnegative matrix is notoriously hard, even harder than in the case of nonnegative rank, and only somewhat trivial bounds are known. For example, dimension counting is enough to conclude that the psd rank of a matrix $M$ is at least $\frac{1}{2} \sqrt{1+8 \, \textup{rank}(M)}-\frac{1}{2}$.  Support based bounds, the most popular type of bounds for nonnegative rank, are of limited strength in the psd case, but can
still yield interesting applications, as shown in \cite{LeeTheis}. In the special case of slack matrices of polytopes, we can do slightly better. For example, if $M$ is the slack matrix of an $n$-dimensional polytope $P$, we actually have $\rankpsd(M) \geq \rank(M) = n+1$, as seen in  \cite{GRT2012,LeeTheis}, by support based arguments. Furthermore, direct application of quantifier elimination bounds guarantees that if $M$ has psd  rank $k$, then $P$ has at most  $k^{O(k^2n)}$ facets (see \cite{GPT2011}). This translates to saying that $\rankpsd(P) \geq \textup{Exp}(\frac{1}{2} W(O(\log(f)/n)) )$, where $f$ is the number of facets of $P$ and $W$ the Lambert $W$-function.  Using the asymptotic behavior of $W$, this results in a lower bound of the type $\Omega \left(\sqrt{\frac{\log(f)}{n\log\log(f)}}\,\right)$.

In this paper we establish several new bounds on psd rank for particular families of matrices with a focus on slack matrices of polytopes. In Section~2 we show that a generic $n$-dimensional polytope with $v$ vertices has psd rank at least $(nv)^{\frac{1}{4}}$, much improving the lower bounds discussed earlier. An analogous result for nonnegative rank of generic polytopes was proven in \cite{FioriniRothvossTiwary}. This implies that a generic polygon with $v$ vertices has psd rank at least $(2v)^{\frac{1}{4}}$ while all $v$-gons have psd rank at most $v$. In Section~3 we improve the upper bound for polygons from $v$ to $4 \left\lceil v/6 \right \rceil$ by showing that all hexagons have psd rank four and then using some psd rank calculus from \cite{GPT2011}. Slack matrices of polygons have rank three and we use the previous upper bound on polygons to show that all rank three matrices have psd rank at most $4\left\lceil \min\{p,q\}/6 \right\rceil$. These results are psd analogs of results on nonnegative rank in \cite{Shitov}, where it is shown that a $v$-gon has nonnegative rank at most $\left\lceil \frac{6v}{7} \right\rceil$. Next we shift gears in Section~4 and examine how low the psd rank of a matrix of fixed rank can get.  A nonnegative matrix of rank ${k+1 \choose 2}$ has psd rank at least $k$. This bound is tight if and only if it is possible to sandwich the psd cone $\PSD^k$ in between two polyhedral cones coming from $M$. We then reduce this geometric condition to the feasibility of a semialgebraic system and use results on quantifier elimination to show that when $k$ is fixed, it is possible to decide in polynomial time whether a nonnegative matrix of rank ${k+1 \choose 2}$ has psd rank $k$.

\section{A Lower Bound on PSD Rank of Generic Polytopes}

In this section we will focus on lower bounds for the psd ranks of generic polytopes. A polytope $P \subset \RR^n$ is said to be {\em generic} if the coordinates of its vertices form an algebraically independent set over the rationals, i.e. the vertex coordinates do not satisfy any non-trivial polynomial equation with rational coefficients. It is clear that no simple description of such a polytope can be expected. It was shown in \cite{FioriniRothvossTiwary} that the nonnegative rank of a generic polygon with $v$ vertices is at least $\sqrt{2v}$. Their proof in fact extends to showing that the nonnegative rank of a generic $n$-dimensional polytope with $v$ vertices is at least $\sqrt{nv}$. We adapt the philosophy of their proof to the psd case to prove the lower bound, $\rankpsd(P) \geq (nv)^{\frac{1}{4}}$.

\begin{theorem} \label{thm:generic polytope}
%The psd rank of a generic polytope $P \subset \RR^n$ with $v$ vertices is at least $(nv)^{\frac{1}{4}}$.
If $P \subset \RR^n$ is a generic polytope with $v$ vertices, then its psd rank is at least $(nv)^{\frac{1}{4}}$.
\end{theorem}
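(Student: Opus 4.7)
The plan is to adapt the parameter-counting argument of Fiorini--Rothvoss--Tiwary for nonnegative rank to the psd setting. The guiding observation is that if $\rankpsd(P)\le k$, then $P$ lies in a semialgebraic family defined over $\mathbb{Q}$ whose dimension grows at most quartically in $k$, which is far smaller than the $nv$ parameters required to describe a generic polytope as soon as $k<(nv)^{1/4}$.

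First, by the psd-lift interpretation of $\rankpsd$ recalled in the introduction, $\rankpsd(P)\le k$ is equivalent to the existence of an affine subspace $L\subseteq\mathcal{S}^k$ and a linear map $\pi\colon\mathcal{S}^k\to\RR^n$ such that $P=\pi(L\cap\PSD^k)$. Setting $d=\dim L$, which must satisfy $n\le d\le\binom{k+1}{2}$, I would parametrize $L$ by its Grassmannian position plus a transverse translation, contributing $(d+1)\bigl(\binom{k+1}{2}-d\bigr)$ real parameters, and parametrize $\pi|_L$ as an affine map $L\to\RR^n$, contributing another $n(d+1)$. Thus the space $\mathcal{V}_{k,n}$ of such lifts is a semialgebraic set defined over $\mathbb{Q}$ of dimension at most
\[
\max_{n\le d\le\binom{k+1}{2}} (d+1)\!\left(\tbinom{k+1}{2}+n-d\right) \;\le\; \frac{1}{4}\!\left(\tbinom{k+1}{2}+n+1\right)^{2}.
\]

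Next I would consider the evaluation map $\varepsilon\colon\mathcal{V}_{k,n}\to(\RR^n)^v$ sending a lift to the ordered tuple of vertices of the polytope $\pi(L\cap\PSD^k)$. Because the psd cone, affine operations and vertex extraction are all $\mathbb{Q}$-definable, the image of $\varepsilon$ is a $\mathbb{Q}$-definable semialgebraic subset of $(\RR^n)^v$ of dimension at most $\dim\mathcal{V}_{k,n}$. If this dimension were strictly less than $nv$, the image would lie in the zero set of some nonzero polynomial in $\mathbb{Q}[X_1,\ldots,X_{nv}]$; but the $nv$ vertex coordinates of a generic polytope are algebraically independent over $\mathbb{Q}$, so no such relation can hold. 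Hence whenever $P$ is generic with $v$ vertices and $\rankpsd(P)\le k$, we must have $\tfrac{1}{4}(\binom{k+1}{2}+n+1)^{2}\ge nv$. Rearranging gives $\binom{k+1}{2}+n+1\ge 2\sqrt{nv}$, and since $v\ge n+1$ forces $n\le\sqrt{nv}$, this reduces (up to the additive constant) to $\binom{k+1}{2}\ge\sqrt{nv}$. Combining with $\binom{k+1}{2}\le k^{2}$ for $k\ge 1$ yields $k^{2}\ge\sqrt{nv}$, i.e.\ $\rankpsd(P)\ge (nv)^{1/4}$, as claimed.

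The step I expect to be most delicate is the parameter count for $\mathcal{V}_{k,n}$: one must verify that nothing beyond the pair $(L,\pi|_L)$ is needed, in particular that a finer accounting for the $GL_k(\RR)$ congruence gauge on $\mathcal{S}^k$ and for affine reparametrizations of the auxiliary coordinates in the lift does not disturb the stated bound. A secondary concern is confirming that $\mathcal{V}_{k,n}$ and the evaluation map $\varepsilon$ are truly $\mathbb{Q}$-definable, since the algebraic-independence argument rests entirely on this rationality.
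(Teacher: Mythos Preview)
Your strategy is sound and rests on the same counting principle as the paper's proof: a psd lift of size $k$ is described by roughly $k^4$ real parameters, while a generic $n$-polytope with $v$ vertices carries $nv$ algebraically independent coordinates. The execution, however, is genuinely different. You argue via the dimension of a $\mathbb{Q}$-definable moduli space $\mathcal{V}_{k,n}$ of lifts and an evaluation map into $(\RR^n)^v$, then invoke that a low-dimensional $\mathbb{Q}$-semialgebraic set cannot contain a point with algebraically independent coordinates. The paper instead fixes one lift, lets $\Gamma$ be the finite set of matrix entries of the pencil (so $|\Gamma|\le d_k^2\le k^4$), and shows for each vertex $p$ of $P$ that its coordinates lie in the real closure $\overline{\QQ(\Gamma)}$. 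This is done by choosing a rational objective $\omega$ for which $p$ is the unique LP optimum, observing that Slater's condition holds for the associated SDP (else the psd rank would drop), writing down the primal--dual KKT system over $\QQ(\Gamma)$, and applying Tarski--Seidenberg to transfer a real solution to $\overline{\QQ(\Gamma)}$. Transcendence degree then gives $nv\le|\Gamma|\le k^4$ directly.

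The paper's route buys you a clean bypass of the one genuine wrinkle in your outline: your evaluation map $\varepsilon$ is not globally defined, since most points of $\mathcal{V}_{k,n}$ do not produce polytopes at all, let alone polytopes with exactly $v$ vertices, and there is no canonical ordering of vertices. This is fixable (restrict to the $\QQ$-definable locus where $\pi(L\cap\PSD^k)$ is a $v$-vertex polytope and pass to the graph of the vertex correspondence rather than a function), but it is exactly the kind of detail your closing paragraph flags. The paper's vertex-by-vertex argument with SDP optimality conditions avoids ever forming such a map. Finally, your endgame inequality can be made exact rather than ``up to the additive constant'': since $\rankpsd(P)\ge n+1$ for any $n$-polytope, you have $n+1\le\binom{k+1}{2}$, whence $\tfrac{1}{4}(\binom{k+1}{2}+n+1)^2\le\binom{k+1}{2}^2\le k^4$, and $nv\le k^4$ follows without any loss.
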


\begin{proof}
Let  $d_k := \frac{k(k+1)}{2}$ be the dimension of the $\RR$-vector space $\mathcal{S}^k$ of $k \times k$ real symmetric matrices. Suppose $P \subset \RR^n$ is a generic polytope with $v$ vertices and $\rankpsd (P)=k$.  Then $P$ is the image under a linear map of a spectrahedron living in $\PSD^k$.  Without loss of generality, we may assume that this linear map is the projection onto the first $n$ coordinates and that $P$ can be written as:
\[ P = \left\{ (x_1,\ldots,x_n)\, | \, \exists \; x_{n+1},\ldots,x_{d_k - 1} \textup{ with } g(x_1,\ldots,x_{d_k-1}) \succeq 0 \right\} \]
where $g(x_1,\ldots,x_{d_k-1}) = x_1G_1 + \ldots +x_{d_k-1}G_{d_k-1} + G_{d_k}$, each $G_i \in \mathcal{S}^k$.

Let $\Gamma$ be the set of distinct real entries in the matrices $G_i$.  Then $|\Gamma| \leq d_k^2 \leq k^4$.  Consider the extension field $\QQ(\Gamma)$ and its real closure $\overline{\QQ(\Gamma)}$ (this is simply the real part of the algebraic closure of $\QQ(\Gamma)$).  The transcendence degree of $\overline{\QQ(\Gamma)}$ is at most $|\Gamma|$ (see \cite[Chap~6]{Hungerford} for the definition of transcendence degree and its basic properties).  We now show that the vertex coordinates of $P$ are all contained in $\overline{\QQ(\Gamma)}$.

Let $p = (p_1,\ldots,p_n)$ be a vertex of $P$ and $\omega \in \QQ^n$ a vector such that the linear program 
($\mathcal{L}$): $\max \{ \omega^Tx \,:\, x \in P \}$
has $p$ as its unique optimal point.  Let $\widetilde{\omega} :=(\omega,0,\ldots,0) \in \QQ^{d_k-1}$ and $S :=\left\{ (x_1,\ldots,x_{d_k-1}) \, | \, g(x_1,\ldots,x_{d_k-1}) \succeq 0 \right\} $. Then the semidefinite program ($\mathcal{SP}$): $\max \{  \widetilde{\omega}^T x  \,:\, x\in S \}$ 
has the same optimal value as ($\mathcal{L}$) and the solutions of ($\mathcal{SP}$) are the points in $S$ that project to $p$.

If $S \cap \textup{int}(\PSD^k) = \emptyset$, then $S$ could be written as an affine slice of a proper face of $\PSD^k$.  Since proper faces of the psd cone are isomorphic to smaller psd cones \cite[Chap~3]{Wolkowicz}, this would mean that $P$ could be written as a projection of an affine slice of $\PSD^l$ for some $l < k$, which would contradict our original assumption of $\rankpsd (P) = k$.  Hence, we must have that $S \cap \textup{int}(\PSD^k) \neq \emptyset$. Thus, Slater's condition \cite[Chap~4]{Wolkowicz} is satisfied for ($\mathcal{SP}$).  Let ($\mathcal{SD}$) denote the semidefinite program dual to ($\mathcal{SP}$).  From semidefinite programming duality theory \cite[Chap~4]{Wolkowicz}, we know that a pair $(x,Y) \in \RR^{d_k-1} \times \mathcal{S}^k $ is an optimal primal-dual pair for the programs ($\mathcal{SP}$), ($\mathcal{SD}$) if and only if they satisfy the following first order conditions:
\[ g(x) \succeq 0,\,\,\,Y \succeq 0, \,\,\, \left\langle G_i,Y \right\rangle = -\widetilde{\omega}_i ,\,\,\, \left\langle g(x),Y \right\rangle = 0 . \]
These conditions are a series of polynomial equations and inequalities with coefficients in $\QQ(\Gamma)$.  By the Tarski-Seidenberg Theorem \cite[Chap~5]{bochnak}, we have that there exists a solution to these equations over $\RR$ if and only if there exists a solution to these equations over $\overline{\QQ(\Gamma)}$.  Since Slater's condition was satisfied, strong duality holds for $(\mathcal{SP})$, and there are solutions $(x,Y)$ over $\RR$.  By our choice of ($\mathcal{SP}$), we have that each of these solutions $(x,Y)$ is of the form $(p_1,\ldots,p_n,x_{n+1},\ldots,x_{d_k-1},Y)$.  Hence, the coordinates $p_1,\ldots,p_n$ are contained in $\overline{\QQ(\Gamma)}$.

By repeating this procedure for each vertex of $P$, we see that $\overline{\QQ(\Gamma)}$ contains all $n$ coordinates for each of the $v$ vertices.  Hence, $\overline{\QQ(\Gamma)}$ contains $nv$ algebraically independent elements.  Thus, the transcendence degree of $\overline{\QQ(\Gamma)}$ is at least $nv$.  Hence, we have that $nv \leq |\Gamma | \leq k^4$.
\end{proof}

In \cite{FioriniRothvossTiwary}, the authors also prove that for each $v \geq 3$, there is a $v$-gon with integer vertices lying in $[2v] \times [4v^2]$ whose nonnegative rank is $\Omega(({v}/{\textup{log } v})^{\frac{1}{2}})$. The same statement also holds in the psd setting with the bound changing to 
$\Omega(({v}/{\textup{log } v})^{\frac{1}{4}})$
as recently shown in \cite{BrietDadushPokutta}.

\section{An Upper Bound on PSD Rank of Polygons}

The result in the previous section implies that the psd rank of a generic $v$-gon is at least $(2v)^{\frac{1}{4}}$, while on the other hand, $v$ is a trivial upper bound on the psd rank of any $v$-gon since its slack matrix has size $v \times v$. This tells us that the worst case rank of a $v$-gon lies somewhere between the two. In this section we will use some simple geometric tools to show that the trivial upper bound can be slightly improved by a constant to $4\left\lceil \frac{v}{6} \right\rceil$. This result can be stated more generally for matrices of rank three. We will show that if $M$ is a nonnegative matrix of rank three of size $p \times q$, then $\rankpsd(M) \leq 4\left\lceil \frac{\min\{p,q\} }{6} \right\rceil$.  These results are analogous to recent results on nonnegative rank of polygons and rank three marices. In \cite{Shitov}, Shitov proved that the nonnegative rank of a $v$-gon is at most $\left\lceil \frac{6v}{7} \right\rceil$, and more generally, that the nonnegative rank of a rank three nonnegative matrix of size $p \times q$ is at most $\left\lceil \frac{6 \textup{min}\{p,q\}}{7} \right\rceil$. 

We begin with a general lemma about psd rank of polytopes.

\begin{lemma}\label{lem:adding a facet}
Let $P$ be a polytope with $\rankpsd(P)=k$, and let $\widetilde{P}$ be a polytope obtained from $P$ by adding either a single inequality to the facet description of $P$ or a single point to the vertex description of $P$.  Then $\rankpsd(\widetilde{P}) \leq k+1$.
\end{lemma}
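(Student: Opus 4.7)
The plan is to invoke the geometric characterization of psd rank already used in the proof of Theorem~\ref{thm:generic polytope}: $\rankpsd(P) \leq k$ if and only if $P = \pi(\{x : g(x) \succeq 0\})$ for some affine linear matrix pencil $g$ with values in $\mathcal{S}^k$ and some linear projection $\pi$. I would handle the two cases (facet and vertex) separately, reducing the vertex case to the facet case by polar duality.

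For the facet case, suppose $\widetilde{P} = P \cap \{y : \ell(y) \geq 0\}$ is obtained by adding a single inequality to $P$. Taking a minimum psd representation $P = \pi(\{x : g(x) \succeq 0\})$ of size $k$, the key observation is that
\[
\widetilde{P} = \pi\!\left(\left\{ x : \begin{pmatrix} g(x) & 0 \\ 0 & \ell(\pi(x)) \end{pmatrix} \succeq 0 \right\}\right),
\]
since a block diagonal matrix is psd if and only if each block is. This exhibits $\widetilde{P}$ as a linear image of an affine slice of $\PSD^{k+1}$, so $\rankpsd(\widetilde{P}) \leq k+1$.

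For the vertex case $\widetilde{P} = \conv(P \cup \{p\})$, I would reduce to the facet case by polar duality. After translating so that $0 \in \textup{int}(P)$ (which leaves the slack matrix and hence the psd rank invariant), the polar $\widetilde{P}^{\circ} = P^{\circ} \cap \{y : p^T y \leq 1\}$ is obtained from $P^\circ$ by adding a single facet inequality, and the facet case yields $\rankpsd(\widetilde{P}^{\circ}) \leq \rankpsd(P^{\circ}) + 1$. The slack matrix of the polar of such a polytope is the transpose of the original slack matrix up to positive diagonal rescaling, and neither operation affects psd rank (one simply rescales and swaps the two families of psd factors). Hence $\rankpsd(P^{\circ}) = \rankpsd(P)$ and $\rankpsd(\widetilde{P}^{\circ}) = \rankpsd(\widetilde{P})$, which combine to give the claimed bound $\rankpsd(\widetilde{P}) \leq k+1$.

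The main obstacle is in the vertex case: a direct homogenization of the pencil $g$ does not yield the tight bound. Writing points of $\widetilde{P}$ as $(1-\lambda) q + \lambda p$ with $q \in P$ and $\lambda \in [0,1]$ and lifting via the homogenization $G(x,t) = tG_0 + \sum_i x_i G_i$, one sees that both $\lambda \geq 0$ and $\lambda \leq 1$ are needed to cut out $\widetilde{P}$ from the lift (dropping either one produces spurious points), forcing two extra $1 \times 1$ blocks and a lift of size $k+2$ rather than $k+1$. Polar duality bypasses this issue by trading the interpolation parameter for a single added halfspace on the polar side, and the minor bookkeeping (full-dimensionality of $P$, the trivial case $p \in P$, the invariances above) is routine.
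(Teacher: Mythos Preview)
Your proof is correct and follows essentially the same route as the paper: a block-diagonal $1\times 1$ extension of the pencil for the facet case, and polar duality to reduce the vertex case to the facet case. The paper simply cites \cite{GPT2011} for $\rankpsd(P)=\rankpsd(P^\circ)$ rather than re-deriving it, but otherwise the arguments coincide.
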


\begin{proof}
First, suppose that $\widetilde{P}$ arises by adding a single inequality to the facet description of $P$.  Then there exists some affine halfspace $A$ such that $\widetilde{P}=P\cap A$.  Write $A$ in the form $\left\{ x \in \RR^n \; | \; a_0+a_1x_1+\ldots+a_nx_n \geq 0 \right\}$.  Since the psd rank of $P$ is $k$, we can write $P$ in the form:
\begin{equation}\label{eq:spect form}
P= \left\{ (x_1,\ldots,x_n)\, | \, \exists \; x_{n+1},\ldots,x_{d_k - 1} \textup{ with } g(x_1,\ldots,x_{d_k-1}) \succeq 0 \right\} 
\end{equation}
where $g$ is the linear pencil given by matrices $G_1,\ldots,G_{d_k} \in \mathcal{S}^k$.  
Now define a vector $\widetilde{a} \in \RR^{d_k}$ with $\widetilde{a}=(a_1,\ldots,a_n,0,\ldots,0,a_0)$ and define matrices $\widetilde{G}_i \in \mathcal{S}^{k+1}$ where the upper left block is $G_i$, the lower right diagonal entry is $\widetilde{a}_i$, and all other entries are $0$. 
If we let the $\widetilde{G}_i$'s play the role of the $G_i$'s in (\ref{eq:spect form}), then this new set will be equal to $\widetilde{P}$.  Hence, $\widetilde{P}$ has a lift into $\PSD^{k+1}$ and we have that $\rankpsd(\widetilde{P}) \leq k+1$.

The case when $\widetilde{P}$ arises by adding a point to the vertex description of $P$ follows from the fact that 
a polytope and its polar both have the same psd rank \cite{GPT2011}.
\end{proof}

\begin{example}\label{ex:pentagons}
By \cite[Theorem~4.7]{GRT2012}, all triangles and quadrilaterals have psd rank three and any polygon with at least five sides has psd rank at least four.  Since a pentagon can be obtained by adding an inequality to the facet description of a quadrilateral, Lemma~\ref{lem:adding a facet} implies that all pentagons have psd rank exactly four.
\end{example}

The following lemma is a direct consequence of the definition of psd rank.

\begin{lemma}\label{lem:projections}
Let $P$ be a polytope and suppose there exists a polyhedron $Q$ and a linear map $\pi$ such that $P=\pi(Q)$.  Then $\rankpsd(P) \leq \rankpsd(Q)$.
\end{lemma}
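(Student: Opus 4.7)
The plan is to unpack the geometric characterization of psd rank recalled in the introduction and observe that psd lifts are closed under post-composition with linear maps. Specifically, set $k := \rankpsd(Q)$. By the extension of Yannakakis' theorem established in \cite{GPT2011}, this means that $Q$ can be written as $Q = \tau(S)$ for some affine slice $S$ of $\PSD^k$ and some linear map $\tau$ (viewing $Q$, possibly unbounded, as a convex set to which the general lift-characterization applies).

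Then I would simply note that
\[
P \;=\; \pi(Q) \;=\; \pi\bigl(\tau(S)\bigr) \;=\; (\pi \circ \tau)(S),
\]
so $P$ itself is the image of the spectrahedron $S \subset \PSD^k$ under the linear map $\pi \circ \tau$. Invoking the geometric characterization in the other direction, this exhibits a psd lift of $P$ of size $k$, whence $\rankpsd(P) \le k = \rankpsd(Q)$.

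The only point that requires care is that Lemma \ref{lem:projections} is stated with $Q$ a polyhedron rather than a polytope, while the paper phrased the lift-characterization explicitly for polytopes. I would resolve this by appealing to the convex-set version of Yannakakis' theorem in \cite{GPT2011}, which defines $\rankpsd$ of a polyhedron via a (generalized) slack operator and shows it coincides with the smallest $k$ admitting a $\PSD^k$-lift; alternatively, one can argue directly at the level of psd factorizations of the slack operators of $P$ and $Q$, pulling back a factorization of the slack operator of $Q$ through $\pi$. This verification is the only non-cosmetic step; the composition argument itself is immediate.
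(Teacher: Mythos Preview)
Your argument is correct and is precisely the intended one: the paper does not even supply a proof, stating only that the lemma ``is a direct consequence of the definition of psd rank,'' and your composition-of-lifts argument is exactly that direct consequence spelled out. Your care about $Q$ being a polyhedron rather than a polytope is appropriate (and correctly handled by appealing to the convex-set version of the lift characterization in \cite{GPT2011}); in the paper's only application of the lemma, $Q$ is in fact a polytope (an octahedron), so this subtlety is moot there.
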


\begin{theorem}\label{thm:hexagons}
Every hexagon has psd rank exactly four.
% and the psd rank of a polygon with $v$ sides is at most $4\left\lceil \frac{v}{6} \right\rceil$.
\end{theorem}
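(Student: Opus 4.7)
The lower bound $\rankpsd(H) \geq 4$ for any hexagon $H$ is immediate from Example~\ref{ex:pentagons}, since $H$ has six sides. The only remaining task is the upper bound $\rankpsd(H) \leq 4$. The natural first attempt---start with a pentagon, which has psd rank exactly $4$, and apply Lemma~\ref{lem:adding a facet} to add one vertex---gives only $\rankpsd(H) \leq 5$. Likewise, expressing $H$ as a Minkowski sum, convex hull, or intersection of polytopes of psd rank $\leq 3$ yields a bound of $5$ or $6$ via the standard psd-rank calculus of \cite{GPT2011}. To reach $4$, one needs a more direct construction.

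My plan is to apply Lemma~\ref{lem:projections}: for each hexagon $H$, exhibit a spectrahedron $Q$ of higher dimension with $\rankpsd(Q) \leq 4$ and a linear projection $\pi$ satisfying $\pi(Q) = H$. Concretely, this amounts to writing an affine $4 \times 4$ matrix pencil
\[ g(x_1,x_2,y) = G_0 + x_1 G_1 + x_2 G_2 + \sum_{i=1}^{m} y_i H_i, \qquad G_0, G_1, G_2, H_i \in \mathcal{S}^4, \]
whose spectrahedron projects via $(x_1,x_2,y) \mapsto (x_1,x_2)$ onto $H$; equivalently, producing a size-$4$ psd factorization of the slack matrix of $H$, i.e., matrices $A_1,\ldots,A_6$ (one per vertex) and $B_1,\ldots,B_6$ (one per facet) in $\PSD^4$ with $\langle A_i, B_j \rangle$ equal to the slack of vertex $i$ in facet $j$. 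A useful warm-up is the \emph{centrally symmetric} case: every centrally symmetric convex hexagon is a zonotope, hence the linear image of a parallelepiped, and the $3$-cube admits the rank-one psd factorization $A_v = (1,v)(1,v)^T \in \PSD^4$ for $v \in \{0,1\}^3$ together with rank-one facet matrices, giving $\rankpsd([0,1]^3) \leq 4$; Lemma~\ref{lem:projections} then yields $\rankpsd(H) \leq 4$ in this case.

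The main obstacle is that hexagons form a $6$-parameter family modulo affine transformations, of which the centrally symmetric ones form a lower-dimensional slice, and since psd rank is not projectively invariant one cannot normalize the remaining parameters away. The construction must depend continuously on the hexagon's vertex data and must handle generic (non-symmetric) hexagons. I expect the hardest step to be verifying that the candidate size-$4$ spectrahedron projects \emph{exactly} onto $H$: this reduces to matching the locus where the pencil $g$ drops rank (after eliminating $y$) with precisely the six facet hyperplanes of $H$, a determinantal condition one attacks via Schur complement reductions or explicit minor-based identities on the $4 \times 4$ pencil.
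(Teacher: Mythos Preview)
Your high-level plan---use Lemma~\ref{lem:projections} and exhibit, for each hexagon $H$, a preimage of psd rank $4$---is exactly the paper's strategy. The difference, and the gap in your proposal, is in \emph{what} you project from. You aim to build a general $4\times 4$ spectrahedron depending on the six vertex parameters and then verify (via Schur complements or minor identities) that its shadow is precisely $H$; you correctly flag this verification as the hard step, and indeed you do not carry it out beyond the centrally symmetric case.

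The paper sidesteps this difficulty entirely by choosing the preimage to be a \emph{polytope}: after affinely normalizing $H$ so that three alternating vertices are $(0,0),(1,0),(0,1)$, it writes down an explicit combinatorial octahedron $O\subset\RR^3$ (six vertices, with coordinates given by simple rational expressions in the remaining three vertices of $H$) and a $2\times 3$ matrix $\pi$ with $\pi(O)=H$. The punchline is that $O$ is \emph{biplanar}---two coordinate planes each contain four of its vertices---and biplanar octahedra are already known to have psd rank $4$ by \cite[Theorem~4.8]{GRT2012}. Thus the entire determinantal analysis you anticipate is replaced by (i) a citation for $\rankpsd(O)=4$ and (ii) a one-line check that six points in $\RR^3$ map to the six vertices of $H$. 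Your cube construction for the centrally symmetric case is in the same spirit; the missing idea is that the octahedron (six vertices, matching the six of $H$) admits a biplanar realization for \emph{every} hexagon, not just the symmetric ones.
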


\begin{proof}
Let $H$ be a hexagon.  We know that $\rankpsd(H) \geq 4$ \cite[Theorem~4.7]{GRT2012}.  Since psd rank is invariant under invertible affine transformations, we may assume that $H$ has vertices $(1,0)$, $(a,b)$, $(0,1)$, $(c,d)$, $(0,0)$, and $(e,f)$ where $(a,b)$, $(c,d)$, and $(e,f)$ lie in the first, second, and fourth quadrants, respectively, and these points also satisfy $a+b>1$, $c+d<1$, and $e+f<1$.

Consider the polytope $O$ in $\RR^3$ with vertices $(0,0,0)$, $(1,0,0)$, $(0,1,0)$, $(0,0,1)$, $(v_1,0,v_3)$, and $(0,w_2,w_3)$, where 
\[v_1 = c-\frac{ad}{b},\: v_3 = \frac{d}{b},\: w_2 = f - \frac{be}{a},\: w_3 = \frac{e}{a}. \]
With this choice of coordinates, we see that $v_1<0$, $v_3 > 0$, $w_2 < 0$, $w_3 > 0$, $v_1 + v_3 < 1$, and $w_2+w_3 < 1$.  These conditions imply that $O$ is a combinatorial octahedron.
In \cite{GRT2012}, an octahedron $O$ was defined to be \emph{biplanar} if there exist two distinct planes $E_1$ and $E_2$ such that $O \cap E_i$ contains four vertices of $O$ for $i=1,2$.  By intersecting the $O$ defined above with the xz and yz-planes, we see that it is biplanar.  Thus by \cite[Theorem~4.8]{GRT2012}, we have that $\rankpsd(O) = 4$.  Define a linear map $\pi: \RR^3 \rightarrow \RR^2$ by the matrix $\left( \begin{array}{ccc} 1 & 0 & a \\ 0 & 1 & b \end{array} \right)$.  Then $\pi(O)=H$ and by Lemma~\ref{lem:projections}, $\rankpsd(H) = 4$.  This lift of a hexagon to an octahedron is shown in Figure~\ref{fig:hexlift}.
\end{proof}

\begin{figure}[h!]
\ \ \ \
\begin{minipage}[c]{0.35\textwidth}
    \includegraphics[width=\linewidth]{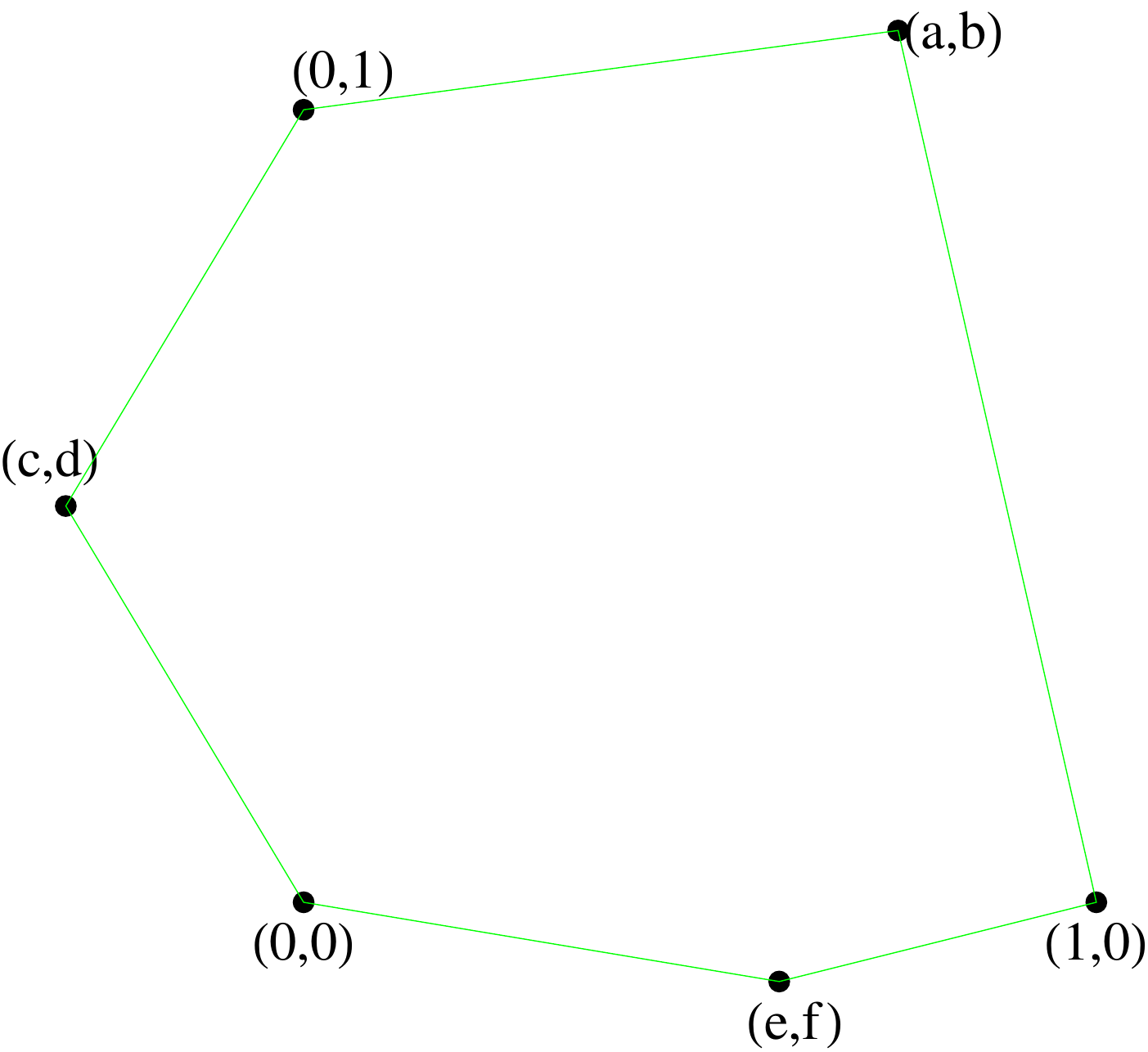}
\end{minipage}
 \hspace{-1.0cm}
\begin{minipage}[c]{0.65\textwidth}
	\includegraphics[width=\linewidth]{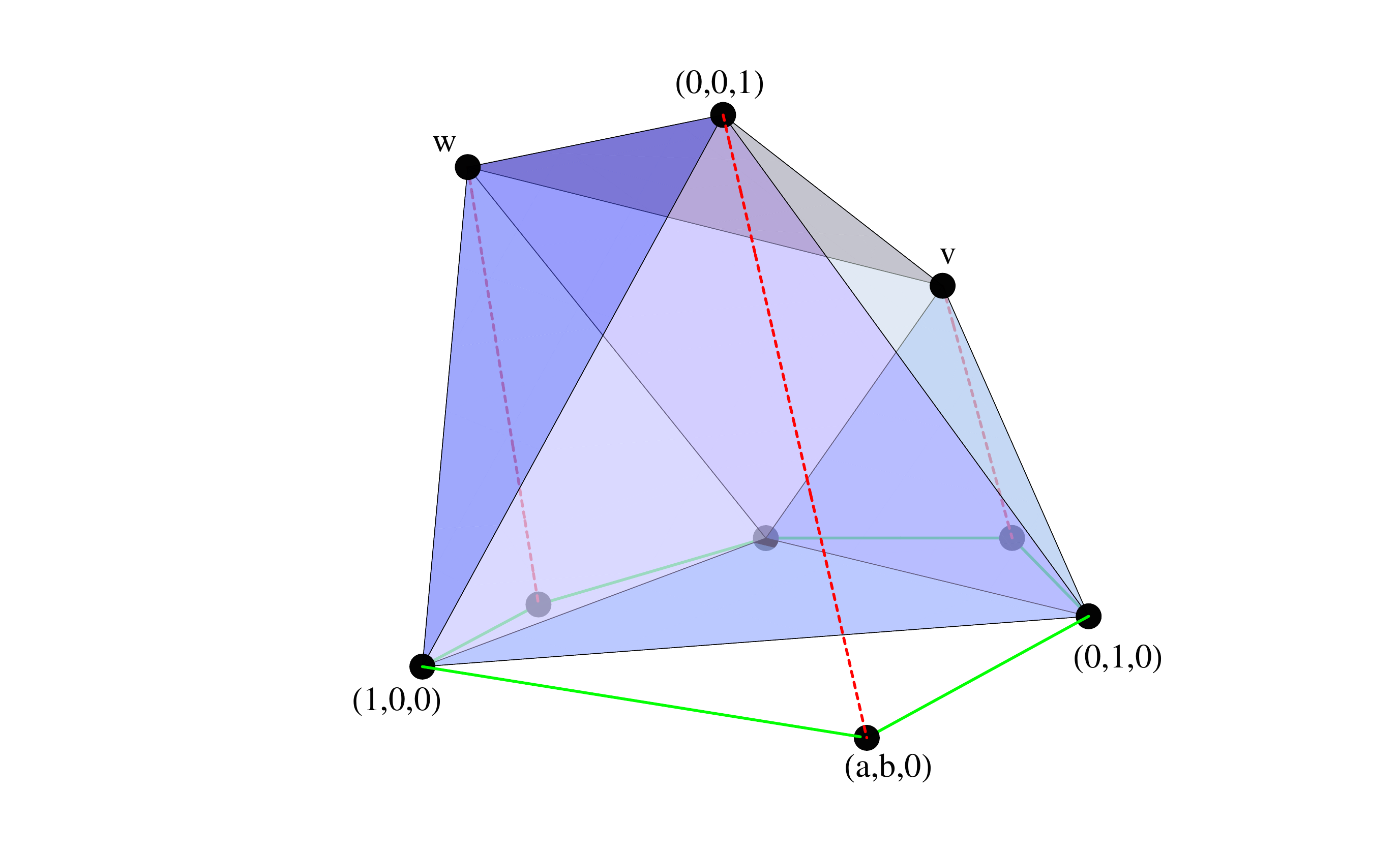}
\end{minipage}
	\caption{This figure depicts the lift of a hexagon to an octahedron shown in the proof of Theorem~\ref{thm:hexagons}.  The left picture shows a hexagon in a normalized form.  The right picture shows the octahedron and its linear projection onto the hexagon.  The projection map is the identity on the xy-plane and is depicted by the red dashed lines for the vertices of the octahedron not in the xy-plane.}
	\label{fig:hexlift}
\end{figure}

%The second statement clearly holds for $v \leq 3$.  Consider a $v$-gon with $v > 3$.  Then we can partition the vertices of the $v$-gon into $\lceil v/6 \rceil$ subsets, where each subset contains at least 3 and at most 6 vertices. The convex hull of each subset of vertices is a nontrivial polygon with at most six sides and the $v$-gon is the convex hull of the union of these $\lceil v/6 \rceil$  smaller polygons. Proposition 2.8 in \cite{GPT2011} states if a convex body $C$ is the convex hull of $C_1 \cup C_2$ where $C_1$ and $C_2$ are also convex bodies, then $\rankpsd(C) \leq \rankpsd(C_1) + \rankpsd(C_2)$. It now follows that the psd rank of a $v$-gon is at most $4\left\lceil \frac{v}{6} \right\rceil$.

To obtain our results for nonnegative matrices of rank three, we recall the notion of a {\em generalized slack matrix} and an interpretation of its psd rank.

\begin{definition} \label{def:general slack}
Let $P \subset \RR^n$ be a full-dimensional polytope and $Q \subset \RR^n$ be a polyhedron with $P \subseteq Q$.  Suppose 
$P$ is represented as a convex hull of points in the form $P = \textup{conv}(p_1,\ldots,p_v)$ and $Q$ is represented by inequalities as $Q = \left\{x \in \RR^n \; | \; c_j^Tx \leq d_j, \,\,j=1,\ldots,f \right\}$ where $c_j \in \RR^n$ and 
$d_j \in \RR$.  Then the generalized slack matrix of the pair $P,Q$ is the $v \times f$ nonnegative matrix $S_{P,Q}$ whose $(i,j)$-entry is $d_j - c_j^T p_i$.
\end{definition}

It is a well-known result in the community that the nonnegative rank of $S_{P,Q}$ is the smallest nonnegative rank of a polyhedron $R$ such that $P \subseteq R \subseteq Q$ (\cite{BraunFioriniPokuttaSteurer}; see \cite{GillisGlineur},\cite{Pashkovich} for related statements). The same result also holds for psd rank, and in fact for any cone rank in the sense of \cite{GPT2011}. We include a proof of the psd case, as we will use the result more than once and it does not seem to be written anywhere.

\begin{proposition}\label{prop:generalized cone extensions}
Let $P$ and $Q$ be polyhedra as in Definition~\ref{def:general slack}, and suppose $Q$ does not contain any lines.  Then $\rankpsd(S_{P,Q})$ is equal to the smallest $k$ such that there exists an affine slice $L$ of $\PSD^k$ and a linear map $\pi$ such that $P \subseteq \pi(L) \subseteq Q$. (We call $k$ the psd rank of the pair $P,Q$ and it measures the smallest possible psd rank of a convex set sandwiched between $P$ and $Q$.)
\end{proposition}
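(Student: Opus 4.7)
The plan is to prove both directions of the asserted equality. In one direction, I will turn a size-$k$ psd factorization of $S_{P,Q}$ into a sandwich $P \subseteq \pi(L) \subseteq Q$ with $L$ an affine slice of $\PSD^k$; in the other, a sandwich will yield a psd factorization of $S_{P,Q}$ of the same size. The hypothesis that $Q$ contains no lines enters because it forces the facet normals $c_1,\ldots,c_f$ of $Q$ to span $\RR^n$, which is what lets us recover a point of $\RR^n$ from the vector of its facet slacks.

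For the forward direction, suppose $A_1,\ldots,A_v,B_1,\ldots,B_f \in \PSD^k$ form a psd factorization of $S_{P,Q}$. I would consider the affine map $\sigma \colon \mathcal{S}^k \to \RR^f$ with $\sigma(Y)_j = d_j - \langle B_j, Y\rangle$ and the linear map $\tau \colon \RR^n \to \RR^f$ with $\tau(x)_j = c_j^T x$. Since $Q$ has no lines, $\tau$ is injective, so $\tau^{-1}$ is defined on its image. Set $L := \PSD^k \cap \sigma^{-1}(\textup{image}(\tau))$, an affine slice of $\PSD^k$, and $\pi := \tau^{-1}\circ\sigma$ restricted to $L$. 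The factorization identity reads $\sigma(A_i) = \tau(p_i)$, so every $A_i$ lies in $L$ with $\pi(A_i) = p_i$, forcing $P \subseteq \pi(L)$ by convexity. Conversely, for any $Y \in L$ and any $j$, $c_j^T \pi(Y) = \sigma(Y)_j = d_j - \langle B_j, Y\rangle \leq d_j$ because $B_j, Y \succeq 0$, giving $\pi(L) \subseteq Q$.

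For the reverse direction, assume $L$ and $\pi$ are given as in the statement. For each vertex $p_i$ of $P$, pick some $A_i \in L$ with $\pi(A_i) = p_i$; this is possible because $p_i \in P \subseteq \pi(L)$, and $A_i \in \PSD^k$ is automatic. It remains to extract matrices $B_j \in \PSD^k$ satisfying $\langle A_i, B_j\rangle = d_j - c_j^T p_i$ for every $i,j$. The decisive observation is that the affine function $\ell_j(Y) := d_j - c_j^T \pi(Y)$ is nonnegative on $L$, because $\pi(L) \subseteq Q$. A Farkas-type lemma for semidefinite programming then lets me write $\ell_j(Y) = \langle B_j, Y\rangle + \mu_j(Y)$ with $B_j \in \PSD^k$ and $\mu_j$ an affine function vanishing on $\textup{aff}(L)$; specialising to $Y = A_i$ gives $\langle A_i, B_j\rangle = \ell_j(A_i) = d_j - c_j^T p_i$, the required factorization entry.

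The main obstacle will be justifying the Farkas-style representation in the reverse direction, since strong conic duality requires a regularity condition such as Slater's, and $L$ need not meet $\textup{int}(\PSD^k)$. I plan to handle this by passing to the minimum face $F$ of $\PSD^k$ containing $L$; proper faces of $\PSD^k$ are isomorphic to smaller psd cones $\PSD^{k'}$, as already exploited in the proof of Theorem~\ref{thm:generic polytope}, and $L$ automatically meets the relative interior of $F$, so the Farkas representation holds over $F$. This harmlessly decreases $k$ and therefore only strengthens the factorization bound; the hypothesis that $Q$ has no lines enters only in the forward direction, where it guarantees the injectivity of $\tau$.
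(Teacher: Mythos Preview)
Your forward direction is correct and essentially identical to the paper's: both intersect $\PSD^k$ with the affine subspace $\{Y : d_j - \langle B_j, Y\rangle \in \tau(\RR^n)\text{ for all }j\}$ and use injectivity of $\tau$ (equivalently, that $Q$ has no lines) to recover $\pi$.

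The reverse direction, however, has a genuine gap. The Farkas-type representation you claim---$\ell_j(Y) = \langle B_j, Y\rangle + \mu_j(Y)$ with $B_j \succeq 0$ and $\mu_j$ \emph{vanishing} on $\textup{aff}(L)$---is stronger than what conic duality gives, and facial reduction does not rescue it. Once Slater's condition holds, strong duality for $\min\{\ell_j(Y): Y\in L\}$ yields $B_j\succeq 0$ with $\ell_j(Y)=\langle B_j,Y\rangle + m_j$ for all $Y\in\textup{aff}(L)$, where $m_j=\min_{Y\in L}\ell_j(Y)\geq 0$ is the common optimal value; there is no reason for $m_j$ to be zero. Specialising to $Y=A_i$ then gives $\langle A_i,B_j\rangle=(S_{P,Q})_{ij}-m_j$, a factorization of the wrong matrix. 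Concretely, take $n=1$, $P=[0,1]$, $Q=[a,\infty)$ with $a<0$, $L=\PSD^1=[0,\infty)$, and $\pi$ the identity. Then $P\subseteq\pi(L)\subseteq Q$, the unique facet slack is $\ell_1(Y)=Y-a$, and no $B_1\geq 0$ satisfies $B_1Y=Y-a$ on $\textup{aff}(L)=\RR$. Your choice $A_1=0$ (the preimage of $p_1=0$) would force $\langle A_1,B_1\rangle=0$, whereas $(S_{P,Q})_{11}=-a>0$. Slater already holds here, so facial reduction changes nothing.

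The paper avoids this difficulty by not attempting a direct Farkas argument: it invokes \cite[Theorem~2.4]{GPT2011} as a black box, which furnishes maps $\sigma\colon C\to\PSD^k$ and $\tau\colon C^\circ\to\PSD^k$ factoring the slack operator of $C=\pi(L)$, and then simply evaluates these at the $p_i$ and $c_j$. To make your direct approach work you would need an additional normalization of the lift (so that constants on $\textup{aff}(L)$ can be absorbed into a psd inner product), and this must be done without destroying the containment $P\subseteq\pi(L)$; as written, that step is missing.
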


\begin{proof}
After translation and rescaling we may assume that 
$$Q=\left\{x \in \RR^n \; | \; c_j^Tx \leq 1, \,\,j=1,\ldots,f \right\}.$$  Let $\ell$ denote the psd rank of $S_{P,Q}$.  
Then $S_{P,Q}$ has a psd factorization through $\PSD^\ell$.  Thus there exist matrices $U_1,\ldots,U_v,V_1,\ldots,V_f \in \PSD^\ell$ such that $(S_{P,Q})_{ij} = \left\langle U_i, V_j \right\rangle$.  Define an affine set 
\[ A=\left\{(x,M) \in \RR^n \times \mathcal{S}^\ell \; | \; 1 - c_j^Tx =\left\langle M,V_j \right\rangle \textup{ for all } j=1,\ldots,f \right\}. \]
Let $A_M$ be the projection of $A$ onto the $M$ coordinates and define $L = A_M \cap \PSD^\ell$.  Define $\pi$ to be the map on $L$ that sends $M$ to any element $x \in \RR^n$ where $(x,M) \in A$.  This map is well-defined and linear.  Since $(p_i, U_i) \in A$, we see that $P \subseteq \pi(L)$.  Also, for $z \in \pi(L)$, we have that $c_j^Tz \leq 1$ for all $j=1,\ldots, f$.  Thus, $\pi(L) \subseteq Q$.  Hence, $\ell$ is greater than $k$, the psd rank of the pair $P,Q$.

For the converse, note that there exists a convex set $C$ with $P \subseteq C \subseteq Q$ such that $C$ has psd rank $k$. By \cite[Theorem 2.4]{GPT2011}, the slack operator $S_C$ is factorizable through $\PSD^k$, i.e. there exist maps
$\sigma: C \rightarrow \PSD^k$ and $\tau: C^\circ \rightarrow \PSD^k$ 
such that $1- \left\langle x,y \right\rangle = \left\langle \sigma(x),\tau(y) \right\rangle$ for $(x,y) \in C\times C^\circ$.  Here $C^\circ$ denotes the polar of $C$. Then $\sigma(p_1),\ldots,\sigma(p_v),\tau(c_1),\ldots,\tau(c_f)$ give a $\PSD^k$-factorization of $S_{P,Q}$, and so $k \geq \ell$.
\end{proof}

Now suppose we are given a nonnegative $p \times q$ matrix $M$ with $\rank(M)=3$ and we are interested in $\rankpsd(M)$.  First, we may assume that $M$ has no zero rows, since adding or removing zero rows from $M$ will not affect its psd rank.  Therefore, if $\onevec$ denotes the vector of all ones, then $M \onevec$ is a strictly positive vector. Since scaling the rows of $M$ by positive scalars does not affect the psd rank, we can then assume that $\onevec$ 
is in the column span of $M$.  Now consider a {\em rank factorization}  $M=UV$ with $U \in \RR^{p \times 3}$ having rows $U_i=(1,u_i^T)$ for $u_i \in \RR^2$ and $V \in \RR^{3 \times q}$. 
Let $$P :=\textup{conv}(u_1,\ldots,u_p) \textup{ and } Q :=\left\{x \in \RR^2 \; : \; (1, x^T) V \geq 0\right\}.$$ Then the pair $P,Q$ satisfies the conditions of Proposition~\ref{prop:generalized cone extensions} and $M=S_{P,Q}$.  Hence, 
$$\rankpsd(M) = \rankpsd(S_{P,Q}) \leq \rankpsd(P)$$
% \leq 4\l\flushright$\Box$ \end{proof}eft\lceil \frac{p}{6} \right\rceil$$ 
where the inequality follows from Proposition~\ref{prop:generalized cone extensions}. In particular, a $6 \times q$ nonnegative matrix of rank three is the generalized slack matrix of a hexagon inside a $q$-gon and so has psd rank at most four.
% and the second inequality from Theorem~\ref{thm:hexagons and ngons}. 
Since $\rankpsd(M) = \rankpsd(M^T)$, the psd rank of a $p \times 6$ nonnegative matrix of rank three is also at most four. 

\begin{theorem} \label{thm:rank 3}
Let $M$ be a nonnegative $p \times q$ matrix with $\rank(M)=3$.  Then $\rankpsd(M) \leq 4\left\lceil \frac{\min\{p,q\} }{6} \right\rceil$. In particular, the psd rank of an $v$-gon is at most $4\left\lceil \frac{v}{6} \right\rceil$.
\end{theorem}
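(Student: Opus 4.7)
The plan is to combine the $6 \times q$ special case established in the displayed inequality immediately before the theorem statement with a subadditivity property of psd rank under row partitioning, then symmetrize.

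First I would record the following subadditivity lemma: if the rows of a nonnegative matrix $M$ are partitioned into submatrices $M^{(1)},\ldots,M^{(k)}$, where each $M^{(i)}$ consists of a subset of the rows of $M$, then $\rankpsd(M)\leq \sum_{i=1}^k \rankpsd(M^{(i)})$. To prove this, start from psd factorizations $(M^{(i)})_{ab} = \langle A^{(i)}_a, B^{(i)}_b\rangle$ with factors in $\PSD^{r_i}$, and assemble a factorization of $M$ through $\PSD^{r_1+\cdots+r_k}$: set $B_j$ to be the block diagonal of $B^{(1)}_j,\ldots,B^{(k)}_j$ and, for a row $a$ belonging to the $i$-th group, take $A_a$ to be block diagonal with only the $i$-th block nonzero, equal to $A^{(i)}_a$. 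Both families lie in $\PSD^{r_1+\cdots+r_k}$, and the trace pairing $\langle A_a, B_j\rangle$ decomposes block-wise, with only the $i$-th block surviving, so it equals $\langle A^{(i)}_a, B^{(i)}_j\rangle = M_{aj}$.

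With this in hand, I would partition the $p$ rows of $M$ into $\lceil p/6\rceil$ groups of at most six rows. Each submatrix $M^{(i)}$ has at most six rows and $\rank(M^{(i)})\leq 3$; by the derivation in the paragraph preceding the theorem, applied to $M^{(i)}$ (which still has $\onevec$ in its column span since we restrict to a subset of rows), $M^{(i)}$ is the generalized slack matrix of a pair $(P_i, Q_i)$ with $P_i$ a polygon having at most six vertices. Triangles and quadrilaterals have psd rank three by \cite[Theorem~4.7]{GRT2012}, pentagons have psd rank four by Example~\ref{ex:pentagons}, and hexagons have psd rank four by Theorem~\ref{thm:hexagons}, so $\rankpsd(P_i)\leq 4$; degenerate lower-dimensional cases only have smaller psd rank. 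Proposition~\ref{prop:generalized cone extensions} then gives $\rankpsd(M^{(i)})\leq \rankpsd(P_i)\leq 4$, and the subadditivity lemma yields $\rankpsd(M)\leq 4\lceil p/6\rceil$. Replacing $M$ by $M^T$ and using $\rankpsd(M)=\rankpsd(M^T)$ yields the analogous bound $4\lceil q/6\rceil$, and taking the minimum gives the claimed inequality. The $v$-gon statement is immediate: its slack matrix is $v\times v$ of rank three.

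The main technical step is the subadditivity lemma; the only real point of care is checking that the block-diagonal assembly produces positive semidefinite matrices and that the cross-block terms in the trace pairing vanish so that only the intended block contributes. Once this is in place, the theorem falls out by assembling results already established in the paper.
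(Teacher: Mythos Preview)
Your proposal is correct and follows essentially the same approach as the paper's proof: partition the matrix into blocks of size at most six along one dimension, use the hexagon result to bound each block's psd rank by four, and assemble via the block-diagonal subadditivity construction. The only cosmetic difference is that you partition rows and then transpose, whereas the paper partitions columns (having already used the transpose to get the $p\times 6$ case); the underlying argument and the explicit block-diagonal factorization are identical.
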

\begin{proof}
We can write $M$ as the concatenation of $\lceil q/6 \rceil$ matrices with $p$ rows and at most six columns, each of which therefore has psd rank at most four. The result now follows by noting that the psd rank of the concatenation of two matrices is at most the sum of the psd ranks of the individual matrices. Indeed, if $\{A_i \}, \{B_j\}$ factorize $M$ and $\{A_i'\}, \{C_k\}$ factorize $M'$ then the following block diagonal matrices factorize $\left[ M_1 \,\,M_2 \right]$:
$$  \left\{ \left( \begin{array}{cc} A_i & 0 \\ 0 & A_i' \end{array} \right) \right\}, \,\, \left\{ \left( \begin{array}{cc} B_j & 0 \\ 0 & 0 \end{array} \right), 
\,\,\left( \begin{array}{cc} 0 & 0 \\ 0 & C_k \end{array} \right) \right\}.$$
\end{proof}

Very little is known about the ranks of $v$-gons for $v \geq 7$. For example, we know that all $7$-gons have psd rank either four or five by Theorem \ref{thm:hexagons}, Lemma \ref{lem:adding a facet} and \cite{GRT2012}, but there is no concrete heptagon whose psd rank is actually known. We know some $8$-gons with psd rank four, but we have no idea how high their psd rank can be, apart from the trivial upper bound of six, obtained again by Lemma \ref{lem:adding a facet}. In fact the smallest ``concrete'' polygons known to have psd rank greater than four are the generic polytopes whose lower bounds are guaranteed by Theorem \ref{thm:generic polytope}, which in this case is a generic $129$-gon.

\section{Geometry of Minimal PSD Rank}

Up until now in this paper we focused on studying matrices of a fixed rank which have high psd rank. In particular, in the previous section we gave upper bounds on the psd rank of a rank three matrix.
In this section we go in the opposite direction and study matrices of fixed rank with minimal psd rank. Given a nonnegative matrix $M$ of rank three, a dimension count immediately shows that $\rankpsd(M) \geq 2$.
We now derive a geometric characterization of when $\rankpsd(M)=2$ 
which will generalize to higher values of rank and yield a complexity result for psd rank.

As before Theorem~\ref{thm:rank 3}, we may assume that our rank three matrix $M$ has size $p \times q$, it has no all-zero rows or columns,
and that $\onevec$ is in the column span of $M$.  Let $M=UV$ be a rank factorization of $M$ with $\onevec$ as the first column of $U$.  Let the rows of $U$ be $(1,u_1^T), \ldots, (1,u_p^T)$ and define polyhedra $P := \conv(u_1,\ldots, u_p)$ and $Q :=\{ x \in \RR^2 \, |\, (1,x^T) V \geq 0 \}$ as before.  Then $P \subseteq Q$ and $M = S_{P,Q}$.

By Proposition~\ref{prop:generalized cone extensions}, we know that $\rankpsd(M)=2$ if and only if there exists a linear map $\pi$ and an affine space $L$ such that $P \subseteq \pi(L \cap \PSD^2) \subseteq Q$.  Since translating $P$ and $Q$ will not affect the slack matrix $M$, we may assume that $0 \in \textup{int}(P)$.  Under this assumption, we see that the affine space $L$ cannot be all of $\mathcal{S}^2$.  Hence, $L$ must be a two-dimensional slice of $\mathcal{S}^2$ and $\pi|_L$ must be invertible.  Since $\PSD^2$ is linearly equivalent to the positive half of the three-dimensional second order cone, $\{ (x,y,z) \; | \; x^2+y^2 \leq z^2 \textup{ and } z \geq 0\}$, we see that $L \cap \PSD^2$ is the linear image of the convex hull of a ``half-conic'' where half-conics are all ellipses, parabolas, and connected components of hyperbolas in $\RR^2$.  Finally, we use the fact that the set of conics is invariant under invertible linear transformations to see the following.

\begin{proposition}\label{prop:conic characterization}
Let $M$ be a nonnegative rank three matrix.  Let $P \subseteq Q \subseteq \RR^2$ be the polytope and polyhedron arising from a rank factorization of $M$ as above.  Then $\rankpsd(M)=2$ if and only if there exists a half-conic such that its convex hull $C$ satisfies $P \subseteq C \subseteq Q$.  In particular if $Q$ is bounded, then $\rankpsd(M)=2$ if and only if we can fit an ellipse between $P$ and $Q$.
\end{proposition}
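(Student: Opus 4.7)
The plan is to read off the proposition directly from Proposition~\ref{prop:generalized cone extensions} together with the explicit geometry of $\PSD^2$, so the proof is essentially a bookkeeping argument structured as a chain of equivalences. I would first translate both $P$ and $Q$ so that $0 \in \textup{int}(P)$; since translation modifies the factor $V$ (hence the pair $(P,Q)$) but not the slack matrix $M$, this is harmless. Then by Proposition~\ref{prop:generalized cone extensions}, $\rankpsd(M)=2$ iff there exist a linear map $\pi:\mathcal S^2\to\RR^2$ and an affine subspace $L\subseteq\mathcal S^2$ with $P\subseteq\pi(L\cap\PSD^2)\subseteq Q$.

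Next I would nail down the dimensions of $L$ and $\pi$. Since $\mathcal{S}^2$ is $3$-dimensional and $P$ is $2$-dimensional with $0$ in its interior, $L$ cannot be all of $\mathcal{S}^2$ (otherwise $\pi(\PSD^2)$ is either all of $\RR^2$, a half-plane, or a cone, none of which can be sandwiched between a full-dimensional bounded $P$ and $Q$ with $0\in\textup{int}(P)$). Nor can $L$ be $1$- or $0$-dimensional, because then $\pi(L\cap\PSD^2)$ is at most $1$-dimensional and cannot contain $P$. So $L$ is $2$-dimensional and $\pi|_L$ must be injective (surjective onto a $2$-dimensional set), hence an affine isomorphism $L\to\RR^2$.

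The key geometric input is that, via the linear isomorphism
\[
\begin{pmatrix} z+x & y \\ y & z-x \end{pmatrix}\;\longleftrightarrow\;(x,y,z),
\]
$\PSD^2$ is linearly equivalent to the upper half of the second-order cone $\{x^2+y^2\leq z^2,\;z\geq 0\}$. A $2$-dimensional affine slice of this cone is exactly the epigraphical half of a conic section of a cone in $\RR^3$: an ellipse, a parabola, or one connected component of a hyperbola, together with its bounded/unbounded convex interior. Thus $L\cap\PSD^2$ is (up to linear isomorphism) the convex hull of a half-conic, and because the class of conics is preserved under invertible affine maps, $\pi(L\cap\PSD^2)$ is again the convex hull of a half-conic in $\RR^2$. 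Conversely, given any half-conic $C\subseteq\RR^2$ with $P\subseteq C\subseteq Q$, reading this construction backwards produces the required $(L,\pi)$. This proves the first equivalence.

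For the ``in particular'' clause, if $Q$ is bounded then any sandwiched $C$ must be bounded. Among the three types of half-conics, only ellipses have bounded convex hulls; parabolas and components of hyperbolas produce unbounded convex regions. Hence $C$ must be an ellipse, which completes the proof. The only step where I expect to have to be careful is the dimension argument for $L$, specifically ruling out $L=\mathcal{S}^2$: one needs the assumption $0\in\textup{int}(P)$ to force $\pi(\PSD^2)$ not to contain a half-line through $\pi(0)$ that $P$ would otherwise hide, so it is important that this normalization is applied before invoking Proposition~\ref{prop:generalized cone extensions}.
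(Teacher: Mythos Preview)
Your proof is correct and follows essentially the same approach as the paper: translate so that $0\in\textup{int}(P)$, invoke Proposition~\ref{prop:generalized cone extensions}, argue by dimension that $L$ must be a $2$-plane with $\pi|_L$ invertible, identify $\PSD^2$ with the Lorentz cone so that its $2$-dimensional affine slices are convex hulls of half-conics, and then use invariance of conics under affine isomorphisms. You supply more detail than the paper does (explicitly ruling out $\dim L\le 1$, spelling out the converse direction, and justifying the ``in particular'' clause), but the route is identical.
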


\begin{example}
Consider the one-parameter family of matrices
$$M_{\varepsilon}=\left[
\begin{array}{cccc}
2- \varepsilon & 2- \varepsilon & \varepsilon &  \varepsilon \\
 \varepsilon & 2- \varepsilon & 2- \varepsilon &  \varepsilon \\
 \varepsilon &  \varepsilon & 2- \varepsilon & 2- \varepsilon \\
2- \varepsilon &  \varepsilon & \varepsilon & 2- \varepsilon 
\end{array}
\right],$$
with $\varepsilon \in [0,1]$. For $\varepsilon \not = 1$ this matrix has rank $3$, and we would like to know for which (if any) values of $\varepsilon$ we get $\rankpsd(M)=2$. 
Note that $M_{\varepsilon}=S_{(1-\varepsilon)P,P}$,
where $P$ is the $\pm 1$ square. It is easy to see that we can put a half-conic between $(1-\varepsilon)P$ and $P$ if and only if $1-\varepsilon \leq \sqrt{2}/2$ as seen in Figure \ref{Fig:nestedsquares}. Since it is known that the square itself has psd rank three, Proposition \ref{prop:generalized cone extensions} allows us to completely determine the psd ranks of this matrix family:
$$\rankpsd{M_{\varepsilon}=\left\{ 
\begin{array}{ll}
1 & \textrm{ if } \varepsilon=1;\\
2 & \textrm{ if } \varepsilon \in [1-\sqrt{2}/2,1) ;\\
3 & \textrm{ if } \varepsilon \in [0,1-\sqrt{2}/2).
\end{array}
\right.}$$

\begin{figure}
\centering
\captionsetup{justification=centering}
\includegraphics[scale=0.2]{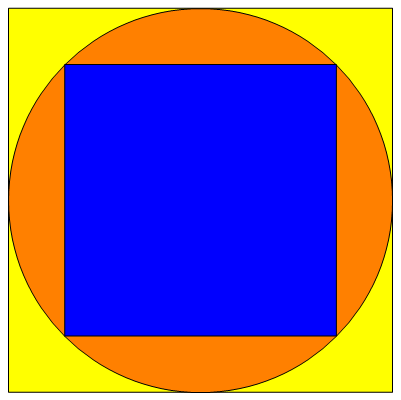}
\caption{Disk nested between $P$ and $(\sqrt{2}/2) P$ where $P$ is the unit square.} 
\label{Fig:nestedsquares}
\end{figure}
 
\end{example}

The geometric techniques used above generalize to higher rank matrices.  Let $M \in \RR^{p \times q}$ be a nonnegative matrix of rank $d= {k+1 \choose 2}$.  Then by a dimension count, $\rankpsd(M) \geq k$.  Thus we can ask the following decision 
problem about $M$:

\begin{definition}
{MIN PSD RANK}: Given a nonnegative matrix $M$ of rank ${k+1 \choose 2}$, is $\rankpsd(M) = k$?
\end{definition}

For ease in working with higher dimensions, we will switch from the polytope viewpoint used above to a conic viewpoint.  In the remainder of this section $d={k+1 \choose 2}=\rank(M)$.  Let $M=UV$ be a rank factorization and let $P,Q$ be the cones $P = \cone(u_1,\ldots,u_p)$ and $Q=\{ x \in \RR^d \, |\, x^T V \geq 0 \}$ where $u_i$ are the rows of $U$.  Then $P$ and $Q$ are $d$-dimensional cones 
with $P \subseteq Q$ and $M=S_{P,Q}$ where $S_{P,Q}$ is a generalized slack matrix of the pair of cones $P,Q$, defined analogously to that for pairs of polyhedra. Using Proposition~\ref{prop:generalized cone extensions} 
and counting dimensions, we get the following geometric characterization of the MIN PSD RANK problem:

\begin{proposition}\label{prop:geometric condition} The psd rank of $M$ is $k$ if and only if there is an invertible linear map $\pi \,:\, \mathcal{S}^k \rightarrow \RR^d$ such that $P \subseteq \pi(\PSD^k) \subseteq Q$.
\end{proposition}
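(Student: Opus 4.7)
The plan is to prove both directions by explicit construction. The inequality $\rankpsd(M)\geq k$ comes for free from the general estimate $\rank(M)\leq\binom{\rankpsd(M)+1}{2}$ combined with the identity $d=\binom{k+1}{2}=\dim\mathcal{S}^k$, so all the content lies in matching $\rankpsd(M)\leq k$ with the geometric condition. The cone analog of Proposition~\ref{prop:generalized cone extensions} already provides some sandwich of $P$ between a linear image of a slice of $\PSD^k$ and $Q$; the real work is to upgrade this to an honest invertible $\pi$ defined on all of $\mathcal{S}^k$.

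For the forward direction, suppose $\rankpsd(M)=k$ and fix a psd factorization $M_{ij}=\langle A_i,B_j\rangle$ of size $k$. Define the linear map $\sigma:\mathcal{S}^k\to\RR^q$ by $\sigma(X)=(\langle X,B_1\rangle,\ldots,\langle X,B_q\rangle)$, so that $\sigma(A_i)$ is the $i$-th row of $M$. Since $\dim\mathcal{S}^k=d$ and the rows of $M$ span a space of dimension $d$, the map $\sigma$ must be injective with image equal to the row span of $M$, which in turn equals the row span of $V$ (using that $M=UV$ is a rank factorization). Because $V$ has rank $d$, the map $u\mapsto u^TV$ is an isomorphism from $\RR^d$ onto this same row span, so composing its inverse with $\sigma$ produces an invertible linear map $\pi:\mathcal{S}^k\to\RR^d$ satisfying $\pi(A_i)=u_i$. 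Writing $v_j$ for the $j$-th column of $V$, conic convexity gives $P\subseteq\pi(\PSD^k)$, while the computation $v_j^T\pi(X)=\langle X,B_j\rangle\geq 0$ for $X\in\PSD^k$ shows $\pi(\PSD^k)\subseteq Q$.

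Conversely, given an invertible $\pi:\mathcal{S}^k\to\RR^d$ with $P\subseteq\pi(\PSD^k)\subseteq Q$, lift each $u_i$ to some $A_i\in\PSD^k$ with $\pi(A_i)=u_i$, and define $B_j=\pi^{*}(v_j)\in\mathcal{S}^k$ using the adjoint $\pi^{*}:\RR^d\to\mathcal{S}^k$. The inclusion $\pi(\PSD^k)\subseteq Q$ translates into $\langle B_j,X\rangle=v_j^T\pi(X)\geq 0$ for every $X\in\PSD^k$, so self-duality of the psd cone forces $B_j\in\PSD^k$. A one-line identity then yields $\langle A_i,B_j\rangle=v_j^Tu_i=M_{ij}$, exhibiting a size-$k$ psd factorization. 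The main obstacle throughout is securing the invertibility of $\pi$: it hinges on the exact equality $\dim\mathcal{S}^k=d=\rank(M)$, which forces the map $\sigma$ above to be a linear isomorphism; without this dimension match one would only obtain a factorization through a proper affine slice of $\PSD^k$, not through the full cone.
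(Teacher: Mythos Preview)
Your proof is correct. The paper itself gives essentially no proof here beyond the one-line remark ``Using Proposition~\ref{prop:generalized cone extensions} and counting dimensions,'' so your write-up is considerably more explicit than what appears in the text. The paper's intended argument is: the conic version of Proposition~\ref{prop:generalized cone extensions} gives some linear map $\pi$ and linear slice $L\subseteq\mathcal{S}^k$ with $P\subseteq\pi(L\cap\PSD^k)\subseteq Q$; since $P$ is $d$-dimensional and $\dim\mathcal{S}^k=d$, one is forced to have $L=\mathcal{S}^k$ and $\pi$ invertible. You instead build $\pi$ directly from a size-$k$ psd factorization via the map $\sigma$, and use the same dimension coincidence $\dim\mathcal{S}^k=d=\rank(M)$ to certify injectivity of $\sigma$. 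Both routes hinge on exactly the same dimension count; yours simply avoids the black-box appeal to Proposition~\ref{prop:generalized cone extensions} and makes the adjoint construction for the converse explicit.
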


In \cite{Vavasis}, Vavasis defined EXACT NMF ({\em Nonnegative Matrix Factorization}) as the problem of determining whether the nonnegative rank of a given matrix $M$ equals its rank. He also defined INTERMEDIATE SIMPLEX which asks, 
given two nested polyhedra $P \subseteq Q$, if there is a simplex $T$ such that $P \subseteq T \subseteq Q$. He proceeded to show that EXACT NMF is equivalent to INTERMEDIATE SIMPLEX.  The above reduction of MIN PSD RANK to the geometric condition of Proposition~\ref{prop:geometric condition} can be thought of as the psd analog to the equivalence shown by Vavasis.

Now we will reduce the geometric criterion into a semialgebraic set feasibility problem.  Consider the basis of $\mathcal{S}^k$ given by the elementary symmetric matrices $E_{ij}$ defined as follows.  Let $E_{ii}$ be the matrix with a one in position $(i,i)$ and zeros everywhere else.  For $i<j$, let $E_{ij}$ be the matrix with $\frac{1}{\sqrt{2}}$ in positions $(i,j)$ and $(j,i)$ and zeros everywhere else. This basis allows a natural bijection between $\mathcal{S}^k$ and $\RR^d$ by identifying a symmetric matrix $Y = \sum_{1 \leq i \leq j \leq d} E_{ij} y_{ij}$ with the vector $y = (y_{ij}) \in \RR^d$. Note that this bijection preserves the inner product in $\mathcal{S}^k$ (this is the reason for the $\sqrt{2}$ factors).  Let $L$ be the $r \times r$ nonsingular matrix representing the invertible linear map $\pi$ with respect to the above basis. Then $\pi(Y) = Ly$, and $\pi^{-1} \,:\, \RR^d \rightarrow \mathcal{S}^k$ sends $z \mapsto L^{-1}z$ where $L^{-1}z$ corresponds to a matrix in $\mathcal{S}^k$ under the bijection discussed above.
We can now write down the conditions given by Proposition~\ref{prop:geometric condition} in terms of $L$ and $L^{-1}$.

The condition that $P \subseteq \pi(\PSD^k)$ is equivalent to $\pi^{-1}(u_i) \in \PSD^k$ for every generator $u_i$ of $P$. Thus we need $L^{-1}u_i \succeq 0$ for each row $u_i$ of $U$. Note that each entry in the symmetric matrix corresponding to $L^{-1}u_i$ is a linear polynomial in the entries of $L^{-1}$. The condition $\pi(\PSD^k) \subseteq Q$ says that for each column $v_j$ of $V$, the linear inequality $v_j^Tx \geq 0$ is valid on $\pi(\PSD^k)$, or equivalently, that for every matrix $A \in \PSD^k$, $v_j^T(\pi(A)) \geq 0$. Therefore, we get that for every column $v_j$ of $V$, the 
symmetric matrix corresponding to $v_j^T L$ is psd. Putting all this together we get the following reduction of the MIN PSD RANK problem.

\begin{proposition} \label{prop:reduction to psd conditions}
The matrix $M$ has psd rank $k$ if and only if there are two matrices $L,K \in \RR^{d \times d}$ such that 
\begin{enumerate}
\item $L$ is the inverse of $K$, i.e., $LK=KL=I$,
\item The $k \times k$ linear matrix inequality $Ku_i \succeq 0$ holds for each row $u_i$ of $U$, 
\item The $k \times k$ linear matrix inequality $v_j^T L\succeq 0$ holds for each column $v_j$ of $V$.
\end{enumerate}
Further, the above system can be written down in polynomial time from $M$.
\end{proposition}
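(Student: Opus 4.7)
The plan is to take the geometric characterization of Proposition~\ref{prop:geometric condition} and express it coordinate-wise with respect to the basis $\{E_{ij}\}$ fixed above. The key asset is that this basis is orthonormal, so the bijection $\mathcal{S}^k \leftrightarrow \RR^d$ preserves inner products; this is what lets me freely swap between ``the vector in $\RR^d$ corresponds to a psd matrix'' and ``a linear matrix inequality holds''.

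First, I represent an invertible linear map $\pi:\mathcal{S}^k\to\RR^d$ by its matrix $L$ in the chosen bases, and set $K = L^{-1}$; the existence of $K$ is exactly condition (1). For condition (2), the containment $P\subseteq \pi(\PSD^k)$ is equivalent, by linearity and because $P = \cone(u_1,\ldots,u_p)$, to $\pi^{-1}(u_i) = K u_i$ representing a psd matrix for each $i$. Translating through the basis bijection produces exactly the LMI in (2), whose entries are linear polynomials in the entries of $K$.

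The more delicate step is condition (3). Here $\pi(\PSD^k)\subseteq Q$ says that each facet functional $v_j^Tx$ is nonnegative on $\pi(\PSD^k)$. Writing $x=\pi(A)=L a$ and using orthonormality of the basis,
\[ v_j^T\pi(A) \;=\; v_j^T L a \;=\; \langle L^T v_j,\, a\rangle \;=\; \langle W_j,\, A\rangle, \]
where $W_j\in\mathcal{S}^k$ is the matrix corresponding to the vector $v_j^T L \in \RR^d$. Nonnegativity for every $A\succeq 0$ is then equivalent, by self-duality of $\PSD^k$, to $W_j\succeq 0$, which is exactly the LMI in (3). I expect this orthonormality-plus-self-duality bookkeeping to be the main subtlety in the argument; once it is handled, combining the three equivalences with Proposition~\ref{prop:geometric condition} closes the biconditional.

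For the polynomial-time claim, all the ingredients needed to write down the system---a rank factorization $M=UV$, the columns $v_j$ and rows $u_i$, and the basis $\{E_{ij}\}$---can be produced in polynomial time from $M$. The final system has $2d^2$ unknowns (the entries of $L$ and $K$), $d^2$ bilinear equations from $LK=KL=I$, and $p+q$ LMIs of size $k$ whose entries are linear in $L$ or $K$ with coefficients drawn from $U$ or $V$, so its total bit-size is polynomial in the size of $M$.
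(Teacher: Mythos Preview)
Your argument is correct and mirrors the paper's own proof almost exactly: both represent $\pi$ by a matrix $L$ in the orthonormal basis $\{E_{ij}\}$, translate $P\subseteq\pi(\PSD^k)$ into $K u_i\succeq 0$ via $K=L^{-1}$, and translate $\pi(\PSD^k)\subseteq Q$ into $v_j^T L\succeq 0$ using self-duality of $\PSD^k$ (which you make more explicit than the paper does). The only cosmetic slip is your phrase ``total bit-size'': the paper works in the Blum--Shub--Smale model, so ``polynomial time'' here refers to the number of arithmetic operations needed to produce the system from a rank factorization of $M$, not to bit complexity.
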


\begin{proof} The equivalence of MIN PSD RANK and the feasibility of the above system was argued in the discussion before the proposition. The scalars in the system come from a rank factorization of $M$ which can be done in polynomial time.
\end{proof}

The number of variables in the above semialgebraic system depends only on $k$ and not on the size of the input matrix $M$.  In \cite{Renegar}, Renegar showed that the feasibility of a system of $m$ polynomial inequalities and equalities in $\ell$ variables with degree at most $j$ can be determined in time $(m j )^{O(\ell)}$.  Here, Renegar used the Blum-Shub-Smale model of complexity for computing with real numbers, so the only requirement on the coefficients of the polynomials is that they are real numbers.  We use this to get a complexity result for MIN PSD RANK.

\begin{theorem}\label{thm:running time}
Using the Blum-Shub-Smale model of complexity, the problem MIN PSD RANK can be solved in time $(pq)^{O(d^{2.5})}$ where $p \times q$ is the dimension of the input matrix $M$ and $d={k+1 \choose 2}$ is the rank of $M$.  In particular, for fixed rank, the problem MIN PSD RANK can be solved in polynomial time.
\end{theorem}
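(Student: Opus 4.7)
The plan is to combine Proposition~\ref{prop:reduction to psd conditions}, which reduces MIN PSD RANK to a feasibility question in a semialgebraic system of bounded dimension, with Renegar's algorithm for such feasibility problems.

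First I would carry out the reduction of Proposition~\ref{prop:reduction to psd conditions}: a rank factorization $M = UV$ with $U\in\RR^{p\times d}$ and $V\in\RR^{d\times q}$ can be computed in time polynomial in $pq$, and the resulting system has as variables the $2d^2$ entries of $L$ and $K$. The equation $LK = I$ contributes $d^2$ quadratic polynomial equations (the companion relation $KL = I$ is automatic for square matrices), and for each of the $p+q$ matrices $Ku_i$ and $v_j^T L$ we must assert a $k \times k$ psd condition. To keep both the number of polynomials and their degree small, I would encode each $A\succeq 0$ constraint via the characteristic polynomial: writing $\det(\lambda I - A)=\lambda^k - c_1(A)\lambda^{k-1}+\cdots+(-1)^k c_k(A)$, the symmetry of $A$ forces real eigenvalues, and Descartes' rule of signs gives $A\succeq 0$ if and only if $c_1(A),\ldots,c_k(A)\geq 0$. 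Since $c_i(A)$ is a polynomial of degree $i\le k$ in the entries of $A$, and those entries are linear in the entries of $K$ or $L$, each resulting inequality has degree at most $k$ in our $2d^2$ variables. This yields $(p+q)k$ inequality constraints in all.

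The resulting system has $\ell = 2d^2$ variables, $m = O(d^2 + (p+q)k)$ polynomial constraints, and maximum degree $j = k$. Renegar's theorem then decides feasibility in time $(mj)^{O(\ell)}$. Using $k=O(\sqrt{d})$ and $p,q\ge d$ (which is forced by $\textup{rank}(M)=d$), one obtains $mj \le O(pq\cdot d^{2.5})$, so
\[ (mj)^{O(\ell)} \,\le\, (pq\cdot d^{2.5})^{O(d^2)} \,=\, (pq)^{O(d^2)}\cdot d^{O(d^2)}. \]
Since $d^{O(d^2)} = 2^{O(d^2\log d)}$ and $d^2\log d = O(d^{2.5})$, the factor $d^{O(d^2)}$ is absorbed into $(pq)^{O(d^{2.5})}$ whenever $pq\ge 2$ (the only nontrivial case). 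The total running time is therefore $(pq)^{O(d^{2.5})}$, which for fixed rank $d$ is polynomial in $pq$.

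There is no deep obstacle once Proposition~\ref{prop:reduction to psd conditions} is available; the only technical choice is how to express the linear matrix inequalities as polynomial inequalities. Using all $2^k - 1$ principal minors would inflate $m$ exponentially in $k$ and ruin Renegar's exponent, whereas the characteristic polynomial formulation yields exactly $k$ polynomials of degree $\le k$ per psd constraint, which is precisely what is needed to arrive at the stated $O(d^{2.5})$ exponent.
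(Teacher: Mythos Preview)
Your proof is correct and follows the same overall strategy as the paper: reduce via Proposition~\ref{prop:reduction to psd conditions} to a semialgebraic feasibility problem in $2d^2$ variables, count constraints and degrees, and invoke Renegar's $(mj)^{O(\ell)}$ bound. The only technical difference is how you encode each $k\times k$ psd constraint: you use the $k$ coefficients of the characteristic polynomial (giving $m = O(d^2 + (p+q)k)$), whereas the paper uses all principal minors, arriving at $m = d^2 + 2^k(p+q)$.

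Your closing remark that the principal-minor encoding would ``ruin Renegar's exponent'' is not right, however. With $m = d^2 + 2^k(p+q)$, $j=k$, and $\ell = 2d^2$ one still gets
\[
(mj)^{O(\ell)} \le \bigl(2^k(p+q)k\bigr)^{O(d^2)} = (pq)^{O(d^2)}\cdot 2^{O(kd^2)} = (pq)^{O(d^2)}\cdot 2^{O(d^{2.5})} \le (pq)^{O(d^{2.5})},
\]
using $k\sim\sqrt{d}$ and $pq\ge 2$. So the paper's choice also yields $(pq)^{O(d^{2.5})}$; indeed this $2^{O(kd^2)}$ term is precisely where the $d^{2.5}$ in the theorem comes from. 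Your encoding is tidier, and had you used $pq\ge d^2$ to absorb $d^{O(d^2)}$ directly into $(pq)^{O(d^2)}$, it would in fact give the marginally sharper bound $(pq)^{O(d^2)}$.
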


\begin{proof}
First, we consider the problem formulated in Proposition~\ref{prop:reduction to psd conditions}.  This problem can be formulated as the existence of a solution to a system of $d^2+2^k(p+q)$ polynomial equalities and inequalities in $2d^2$ variables with each polynomial having degree less than or equal to $k$.  By applying \cite{Renegar} and noting that $d\sim k^2$, we see that this problem can be solved in time $(pq)^{O(d^{2.5})}$.  We conclude by noting that MIN PSD RANK can be reduced to the above problem in time polynomial in $pq$.
\end{proof}

In \cite{Vavasis}, Vavasis showed that EXACT NMF is NP-Hard.  The corresponding question for MIN PSD RANK is still open.  We can consider the more general problem: given a nonnegative $p \times q$ matrix $M$ and a number $k$, determine if $\rankpsd(M) \leq k$.  For the analogous problem with nonnegative rank, Moitra \cite{Moitra} showed an algorithm that runs in time $(pq)^{O(k^2)}$.  Theorem~\ref{thm:running time} can be seen as a restricted psd analog of Moitra's result.

\bibliographystyle{plain}

\end{document}